\newtheorem{thm}{Theorem}
\theoremstyle{definition}
\theoremstyle{remark}
\newtheorem{case}{Case}
\title[On the degree-chromatic polynomial of a tree] {On the degree-chromatic \\polynomial of a tree} 
\author{Diego Cifuentes}
\date{\today}
\address{Diego Cifuentes, undergraduate student, Departamento de Matemáticas, Universidad de los Andes, Cra 1 No 18A 12, Bogotá, Colombia}
\email{df.cifuentes30@uniandes.edu.co}
\keywords { Chromatic polynomial, graph coloring, tree}
\begin{document}

\begin{abstract}
The degree chromatic polynomial $P_m(G,k)$ of a graph $G$ counts the number of $k$-colorings in which no vertex has $m$ adjacent vertices of its same color. We prove Humpert and Martin's conjecture on the leading terms of the degree chromatic polynomial of a tree.
\end{abstract}

\maketitle

Given a graph $G$, Humpert and Martin defined its $m$-chromatic polynomial $P_m(G,k)$ to be the number of $k$-colorings of $G$ such that no vertex has $m$ adjacent vertices of its same color. They proved this is indeed a polynomial.  When $m=1$, we recover the usual chromatic polynomial of the graph $P(G,k)$ \cite{read_introduction_1968}. 

The chromatic polynomial is of the form $P(G,k)=k^n-e k^{n-1}+o(k^{n-1})$, where $n$ is the number of vertices and $e$ the number of edges of $G$. For $m>1$ the formula is no longer true, but Humpert and Martin conjectured the following formula which we now prove:

\begin{thm}[\cite{humpert_incidence_2010,humpert_incidence_2011}, Conjecture]\label{thm:main}
Let $T$ be a tree with $n$ vertices and let $m$ be an integer with $1<m<n$, then the following equation holds.

\begin{equation*}\label{eq:main}
P_m(T,k)=k^n - \displaystyle\sum_{v\in V(T)}{d(v)\choose m}k^{n-m} + o(k^{n-m})
\end{equation*}

\end{thm}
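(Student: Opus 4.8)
The plan is to expand $P_m(T,k)$ by inclusion--exclusion over the ``forbidden'' configurations and to show that, apart from the empty configuration and the single configurations, every term has degree strictly below $n-m$. For each vertex $v$ and each $m$-subset $S\subseteq N(v)$ of its neighbours, let $A_{v,S}$ denote the set of $k$-colorings in which $v$ together with all of $S$ receives one common color; a coloring is counted by $P_m(T,k)$ precisely when it lies in none of the $A_{v,S}$. Writing $\mathcal{E}$ for the set of all such pairs $(v,S)$, inclusion--exclusion gives
\[
P_m(T,k)=\sum_{\mathcal{F}\subseteq\mathcal{E}}(-1)^{|\mathcal{F}|}N(\mathcal{F}),
\]
where $N(\mathcal{F})$ is the number of colorings simultaneously satisfying every constraint in $\mathcal{F}$.

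The crucial step is to evaluate $N(\mathcal{F})$. Each pair $(v,S)$ forces the $m$ edges joining $v$ to $S$ to be monochromatic, so $N(\mathcal{F})$ counts the colorings that are constant on every connected component of the spanning subgraph $H_{\mathcal{F}}\subseteq T$ whose edge set is the union of these stars. Because $T$ is a tree, $H_{\mathcal{F}}$ is a forest; if it uses $e(\mathcal{F})$ distinct edges then it has exactly $n-e(\mathcal{F})$ components, and hence $N(\mathcal{F})=k^{\,n-e(\mathcal{F})}$. Thus every term in the sum is a monomial whose degree is controlled purely by how many edges of $T$ the family $\mathcal{F}$ covers.

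It remains to track the edge counts. The empty family covers no edges and yields the leading $k^n$. A single pair covers exactly $m$ edges, contributing $-k^{n-m}$; since there are $\sum_{v}\binom{d(v)}{m}$ such pairs, together they account for the asserted coefficient $-\sum_{v}\binom{d(v)}{m}$ of $k^{n-m}$. The heart of the argument --- and the place where the hypothesis $m>1$ enters --- is the claim that any two \emph{distinct} pairs already cover at least $m+1$ edges, so that every family with $|\mathcal{F}|\ge 2$ has $e(\mathcal{F})\ge m+1$ and contributes only to $o(k^{n-m})$. I would verify this by cases: two pairs sharing a center but differing in $S$ span $|S_1\cup S_2|\ge m+1$ edges at that center, while two pairs with different centers $v\neq v'$ can share an edge only if it is $vv'$ itself, so they overlap in at most one edge and cover at least $2m-1\ge m+1$ edges once $m\ge 2$.

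The main obstacle is precisely this last inequality, and it is instructive to see why it is sharp: for $m=1$ the two pairs $(v,\{v'\})$ and $(v',\{v\})$ cover the single edge $vv'$ twice, which is exactly the over-counting that makes the naive coefficient $\sum_v\binom{d(v)}{1}=2e$ collapse to $e$ in the ordinary chromatic polynomial. For $m\ge 2$ no such coincidence occurs, the single pairs are the only families reaching degree $n-m$, and collecting the three observations above yields the stated expansion; indeed it yields the exact identity $P_m(T,k)=\sum_{\mathcal{F}}(-1)^{|\mathcal{F}|}k^{\,n-e(\mathcal{F})}$, of which the theorem records the top two terms.
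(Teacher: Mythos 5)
Your proof is correct, and it takes a genuinely different route from the paper's. The paper runs inclusion--exclusion over the $n$ vertex events $A_v$ (``$v$ has at least $m$ neighbors of its own color''), computes $|A_v|=\binom{d(v)}{m}k^{n-m}+o(k^{n-m})$ directly, and then must show that every pairwise intersection $|A_{v_1}\cap A_{v_2}|$ is $o(k^{n-m})$; that step is done by an averaging argument over equivalence classes of colorings, with separate cases according to whether $v_1$ and $v_2$ are adjacent, and the tree hypothesis and $m\ge 2$ enter in the adjacent case. You instead refine the index set to the stars $(v,S)$ with $|S|=m$, which converts every inclusion--exclusion term into an exact monomial: a union of such stars inside a tree is a forest, so it has exactly $n-e(\mathcal{F})$ components and $N(\mathcal{F})=k^{n-e(\mathcal{F})}$. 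The whole theorem then rests on your elementary overlap lemma (two distinct stars cover at least $m+1$ edges when $m\ge 2$), and your case analysis for it is sound: stars with a common center have $|S_1\cup S_2|\ge m+1$, while stars with distinct centers $v\ne v'$ can share only the edge $vv'$, giving at least $2m-1\ge m+1$ edges. What your approach buys is exactness: the identity $P_m(T,k)=\sum_{\mathcal{F}}(-1)^{|\mathcal{F}|}k^{\,n-e(\mathcal{F})}$ determines every coefficient, not just the top two, and it isolates precisely where each hypothesis is used --- for a graph with cycles $H_{\mathcal{F}}$ need not be a forest, so some $N(\mathcal{F})$ would have larger degree, and for $m=1$ your observation that $(v,\{v'\})$ and $(v',\{v\})$ impose the same constraint correctly explains why the coefficient collapses from $2e$ to $e$. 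What the paper's approach buys is that it never needs the refined index set or any exact intersection count, at the price of asymptotic estimates in the pairwise-intersection bounds, which is where its real work lies. Both proofs are complete; yours is arguably the more transparent and slightly stronger, since it yields the full polynomial as an alternating sum of monomials.
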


\begin{proof}
For a given coloring of $T$, say vertices $v_1$ and $v_2$ are ``friends'' if they are adjacent and have the same color. For each $v$, let $A_v$ be the set of colorings such that $v$ has at least $m$ friends. We want to find the number of colorings which are not in any $A_v$, and we will use the inclusion-exclusion principle:

\begin{equation*}
 P_m(T,k)= k^n - \sum_{v\in V} |A_v| + \sum_{v_1, v_2\in V} |A_{v_1} \cap A_{v_2}| - \ldots
\end{equation*}

We first show that $|A_v|={d(v)\choose m}k^{n-m}+o(k^{n-m})$. Let $A_v^{(l)}$ be the set of $k$-colorings such that $v$ has exactly $l$ friends. Then 

\begin{equation*}
|A_v|=\sum_{l=m}^{n-1}|A_v^{(l)}|=\sum_{l=m}^{n-1}{d(v)\choose l} k (k-1)^{n-l-1}={d(v)\choose m}k^{n-m}+o(k^{n-m})
\end{equation*}

To complete the proof, it is sufficient to see that for any set $S$ of at least 2 vertices $|\bigcap_{v\in S} A_v|=o(k^{n-m})$; clearly we may assume $S=\{v_1,v_2\}$.  Consider the following cases:

\begin{case}[$v_1$ and $v_2$ are not adjacent]
Split $A_{v_1}$ into equivalence classes

$$\sigma_1\sim \sigma_2 \Leftrightarrow \sigma_1(w)=\sigma_2(w) \textrm{ for all } w\neq v_2$$

 Note that each equivalence class $C$ consists of $k$ colorings, at most $\frac{d(v_2)}{m}$ of which are in $A_{v_2}$. Therefore

$$ |A_{v_1}\cap A_{v_2}|=\sum_C |C\cap A_{v_2}|\leq \sum_C \frac{d(v_2)}{m} = \frac{|A_{v_1}|}{k}\cdot \frac{d(v_2)}{m}$$

it follows that $\frac{|A_{v_1}\cap{A_{v_2}}|}{|A_{v_1}|}$ goes to $0$ as $k$ goes to infinity, so $|A_{v_1}\cap A_{v_2}|=o(k^{n-m})$.

\end{case}

\begin{case}[$v_1$ and $v_2$ are adjacent]
Let $W$ be the set of adjacent vertices to $v_2$ other than $v_1$. They are not adjacent to $v_1$ as $T$ has no cycles. Split $A_{v_1}$ into equivalence classes.

$$\sigma_1\sim \sigma_2 \Leftrightarrow \sigma_1(w)=\sigma_2(w) \textrm{ for all } w\notin W$$

Each equivalence class $C$ consists of $k^{|W|}$ colorings. If $v_1$ and $v_2$ are friends in the colorings of $C$, then a coloring in $|C \cap A_{v_2}|$ must contain at least $m-1$ vertices in $W$ of the same color as $v_2$. Therefore

$$|C\cap A_{v_2}| = \sum_{l=m-1}^{|W|}{|W| \choose l}(k-1)^{|W|-l} < \sum_{l=0}^{|W|}{|W| \choose l}k^{|W|-1}=2^{|W|}k^{|W|-1}$$

Notice that here we are using $m \geq 2$ so that $l \geq 1$. Otherwise, if $v_1$ and $v_2$ are not friends in the colorings of $C$, then 

$$|C\cap A_{v_2}| = \sum_{l=m}^{|W|}{|W| \choose l}(k-1)^{|W|-l} < \sum_{l=0}^{|W|}{|W| \choose l}k^{|W|-1}=2^{|W|}k^{|W|-1}$$

Therefore

\begin{align*}
|A_{v_1}\cap A_{v_2}|=\sum_C |C\cap A_{v_2}|<& \sum_C 2^{|W|}k^{|W|-1}\\
=& \frac{|A_{v_1}|}{k^{|W|}}\cdot 2^{|W|}k^{|W|-1}
=\frac{|A_{v_1}|\cdot 2^{|W|}}{k}
\end{align*}

and $|A_{v_1}\cap A_{v_2}|=o(k^{n-m})$ follows as in the first case.
\end{case}

This completes the proof of the theorem.
\end{proof}

\textbf{Acknowledgments.} I would like to thank Federico Ardila for bringing this problem to my attention, and for  helping me improve the presentation of this note. I would also like to acknowledge the support of the SFSU-Colombia Combinatorics Initiative.

\nocite{*}
\bibliographystyle{amsplain}
\bibliography{bib}

\providecommand{\bysame}{\leavevmode\hbox to3em{\hrulefill}\thinspace}
\providecommand{\MR}{\relax\ifhmode\unskip\space\fi MR }
\providecommand{\MRhref}[2]{%
  \href{http://www.ams.org/mathscinet-getitem?mr=#1}{#2}
}
\providecommand{\href}[2]{#2}
\begin{thebibliography}{1}

\bibitem{humpert_incidence_2010}
B.~Humpert and J.~L Martin, \emph{The incidence {H}opf algebra of graphs},
  Preprint {arXiv:1012.4786} (2010).

\bibitem{humpert_incidence_2011}
\bysame, \emph{The incidence {H}opf algebra of graphs}, DMTCS Proceedings
  \textbf{0} (2011), no.~01.

\bibitem{read_introduction_1968}
R.~C Read, \emph{An introduction to chromatic polynomials}, Journal of
  Combinatorial Theory \textbf{4} (1968), no.~1, 52--71.

\end{thebibliography}






\end{document}